\newcommand{\D}{\mathcal{D}}
\newcommand{\R}{\mathbb{R}}
\newcommand{\N}{\mathbb{N}}
\newcommand{\E}{\mathbb{E}}
\newcommand{\one}{{\rm 1\hspace{-0.11cm}I}}
\newcommand{\Ltwo}{\mathrm{L}_2}
\theoremstyle{plain}
\newtheorem{thm}{Theorem}[section]
\newtheorem{lem}[thm]{Lemma}
\newtheorem{cor}[thm]{Corollary}
\theoremstyle{definition}
\newtheorem{rem}[thm]{Remark}
\newcommand{\equa}{\begin{eqnarray*}}
\newcommand{\tion}{\end{eqnarray*}}
\newcommand{\equal}{\begin{eqnarray}}
\newcommand{\tionl}{\end{eqnarray}}
\newcommand{\m}{\mathbbm{m}}
\def\timenow{\@tempcnta\time
\@tempcntb\@tempcnta
\divide\@tempcntb60
\ifnum10>\@tempcntb0\fi\number\@tempcntb
:\multiply\@tempcntb60
\advance\@tempcnta-\@tempcntb
\ifnum10>\@tempcnta0\fi\number\@tempcnta}
\begin{document}

%\newlength{\tmpmargin}
%\setlength{\tmpmargin}{\evensidemargin}
%\setlength{\evensidemargin}{\oddsidemargin}
%\setlength{\oddsidemargin}{\tmpmargin}

%\selectlanguage{english}

\title{Functionals of a L\'evy Process on Canonical and Generic Probability Spaces}

\author{Alexander Steinicke\\ \\
        Department of Mathematics  \\
        University of Innsbruck \\
        Technikerstra\ss e 19a \\
        A-6020 Innsbruck \\
        Austria \\
        alexander.steinicke@uibk.ac.at   \\ \\}

\maketitle

%Current version: \today, \timenow

\begin{abstract}
We develop an approach to Malliavin calculus for L\'evy processes from the perspective of expressing a random variable $Y$ by a functional $F$ mapping from the Skorohod space of c\`adl\`ag functions to $\R$, such that $Y=F(X)$ where $X$ denotes the L\'evy process. We also present a chain-rule-type application for random variables of the form $f(\omega,Y(\omega))$.

 An important tool for these results is a technique which allows us to transfer identities proved on the canonical probability space (in the sense of Sol\'e et al.) associated to a L\'evy process with triplet $(\gamma,\sigma,\nu)$ to an arbitrary probability space $(\Omega,\mathcal{F},\mathbb{P})$ which carries a L\'evy process with the same triplet.
\end{abstract}
%%%%%%%%%%%%%%%%%%%%%%%%%%%%%%%%%%%%%%%%%%%%%%%%%%%%%
\section{Introduction}
There exist various approaches for Malliavin calculus with jumps. One of the first works in this field were given by Bismut \cite{bismut} and by Bichteler et al. \cite{bicht}. Recently, the approaches how to define a Malliavin derivative for jump processes split up into two kinds. One way is to consider difference operators, as for example Nualart and Vives \cite{nualviv}, Picard \cite {picard}, Ishikawa and Kunita \cite{ishikawa}, Sol\'e et al. \cite{suv}, Applebaum \cite{apple} and Di Nunno et al. \cite{dinun} do. The other way is to consider ''true`` derivative operators, where a chain rule is possible. Those are given for example by Carlen and Pardoux \cite{carlen}, Denis \cite{denis}, Decreusefond and Savy \cite{decr}, Bally et al. \cite{bally} and Kulik \cite{kulik}.\bigskip

In this paper, we will focus on the view of the Malliavin derivative as difference operator. Here Kiyoshi It\^o's chaos decomposition of random variables \cite{ito} from 1956 is an important tool for defining and working with objects depending on the driving L\'evy process. Many applications utilize the chaos decomposition to change via an isomorphism from the probabilistic view of an $\Ltwo$-space associated with a probability space $(\Omega,\mathcal{F},\mathbb{P})$ to a deterministic view of a direct sum of $\Ltwo$-spaces on $\left({[0,\infty[}\times\R\right)^n$. The chaos decomposition also induces a Fock space structure on the set of square integrable random variables.\bigskip

One particular application of this Fock space structure is a simple description of Malliavin derivatives and Skorohod integrals (Nualart \cite{nualartbook}, Sol\'e et al. \cite{suv}). Using this approach, it is evident that it can be developed for arbitrary probability spaces $(\Omega,\mathcal{F},\mathbb{P})$ where the $\sigma$-algebra $\mathcal{F}$ is generated by a L\'evy process. However, choosing $(\Omega,\mathcal{F},\mathbb{P})=(\Omega^c,\mathcal{F}^c,\mathbb{P}^c)$, the canonical probability space presented by Sol\'e et al. in \cite{suv2}, this type of Malliavin calculus can be introduced in a different manner which strongly relies on the structure of $(\Omega^c,\mathcal{F}^c,\mathbb{P}^c)$. For this special choice, the resulting objects (Malliavin derivatives, Skorohod integrals) coincide with those gained by the Fock space method in a natural way, which has also been shown in \cite{suv2}. Operating on this canonical space has many advantages. For example, a pointwise definition of the Malliavin derivative and pointwise calculations with this object are not a problem, whereas the other approach always demands consideration of the Malliavin derivative as an $\Ltwo$-object.\bigskip

The aim of this paper is to explore how far identities for random variables, for their Malliavin derivatives and for other random fields, which can be shown in one probability space (in particular the canonical space) remain valid in the case of an arbitrary other probability space. The main tools for such investigations are representations of random variables by It\^o's chaos expansion and by functionals of a L\'evy process. These concepts will be introduced in Sections \ref{set} and \ref{furep}. We use the chaos expansion to transform the stochastic setting into a deterministic one, which eventually results in a technique permitting us to transfer identities to different probability spaces. This method will be treated in Section \ref{comp}.\bigskip

Having established such a technique, we present an application in Section \ref{appl}: We compute the Malliavin derivative of random variables of the type $f\left(\cdot,Y(\cdot)\right)$ in the case of a pure-jump L\'evy-process, where $f\colon\Omega\times\R\to \R$ is a measurable function and $Y$ is a Malliavin differentiable random variable. Such functions appear for example when Malliavin differentiation is applied to a generator of a backward stochastic differential equation driven by a L\'evy-process (see for example \cite {elie}, \cite{DelongImk}).
%%%%%%%%%%%%%%%%%%%%%%%%%%%%%%%%%%%%%%%%%%%%%%%%%%%%%%%%%%%%%%%%%%%%%%%%%%%%%%%%%%%
\section{Setting}\label{set}
Throughout the whole paper let $X=\left(X_t\right)_{t\geq 0}$ be a L\'evy-process on a complete probability space $(\Omega,\mathcal{F},\mathbb{P})$
with L\'evy-triplet $(\gamma,\sigma,\nu)$. Assume furthermore that $\mathcal{F}$ is the completion of $\mathcal{F}^X$, the $\sigma$-algebra generated by the process $X$.\bigskip

The L\'evy-It\^o decompositon of $X$ can be written as
\begin{equation*}
X_t = \gamma t + \sigma W_t   +  \int_{{]0,t]}\times\left\{0<|x|\leq 1\right\}} x\tilde{N}(ds,dx)+\int_{{]0,t]}\times\left\{|x|> 1\right\}} x N(ds,dx),
\end{equation*}
where $\sigma\geq 0$, $W$ is a Brownian motion, $N$ is the Poisson random measure and $\tilde N$ the compensated Poisson random measure corresponding to $X$.\bigskip

Define the measure
\equa
\m(dt,dx) =(\lambda\otimes\mu) (dt,dx)
\tion
with
\[\mu(dx)=\sigma^2\delta_0(dx)+x^2\nu(dx)\]
and $\lambda$ denoting the Lebesgue measure. For a set $B\in\mathcal{B}\left({[0,\infty[}\times\R\right)$ with $\m(B)\!<\!\infty$ we define the martingale-valued random measure $M$ by
\equa
   M(B):=\sigma \left(dW\otimes\delta_0\right)(B)+\lim_{n\to\infty}\int_{B\cap\left({[0,\infty[}\times\{1/n<|x|<n\}\right)}x\tilde N(dt,dx),
\tion
where the limit is taken in the space $\Ltwo\left(\Omega,\mathcal{F},\mathbb{P}\right)$. It holds $$\E M(B)^2 = \int_B\m(dt,dx).$$

With respect to the measure $M$ one can define multiple stochastic Integrals $I_n(f_n)$ for $n\in\N$ and for functions $$f_n\in\Ltwo^n:=\Ltwo\left(\left({[0,\infty[}\otimes\R\right)^n,\mathcal{B}\left(\left({[0,\infty[}\otimes\R\right)^n\right),\m^{\otimes n}\right).$$
Every random variable $Y\in\Ltwo\left(\Omega,\mathcal{F},\mathbb{P}\right)$ has a representation
\begin{equation}\label{chaosdec}
Y=\sum_{n=0}^\infty I_n(f_n),\quad\mathbb{P}\text{-a.s}.
\end{equation}
with unique, symmetric integrands $f_n\in\Ltwo^n$. We will refer to (\ref{chaosdec}) as the chaos expansion of $Y$. For precise definitions and more properties of the chaos expansion and multiple integrals see \cite{ito} or \cite{suv}.\bigskip

We use the following notation:
\begin{itemize}
\item With $D({[0,\infty[})$ we denote the Skorohod space of c\`adl\`ag functions on the interval ${[0,\infty[}$ equipped with the Skorohod topology. The $\sigma$-algebra we assume on this space is the Borel $\sigma$-algebra for this topology which coincides with the $\sigma$-algebra generated by the family of coordinate projections $\left(p_t\colon D({[0,\infty[})\to \R,\ f\mapsto f(t),\ t\geq 0\right)$ (see \cite{EKurtz}, Chapter 3 for details).
\item For $n\in\N, n\geq 1$ let $\mathcal{C}^\infty_c(\R^n)$ be the space of infinitely differentiable functions on $\R^n$ with compact support.
\item The canonical probability space for the L\'evy-triplet $(\gamma,\sigma,\nu)$ in the sense of \cite{suv2} will be denoted by $(\Omega^c,\mathcal{F}^{W,J},\mathbb{P}^c)$. If we consider the completion of the $\sigma$-algebra $\mathcal{F}^{X^c}$ which is generated by the canonical L\'evy process on $(\Omega^c,\mathcal{F}^{W,J},\mathbb{P}^c)$, we will denote the resulting probability space by $(\Omega^c,\mathcal{F}^c,\mathbb{P}^c)$.
\end{itemize}
Since the canonical probability space $(\Omega^c,\mathcal{F}^c,\mathbb{P}^c)$ and the canonical L\'evy process $X^c$ play a major role in Section 5, we will briefly describe the construction here:

The space $(\Omega^c,\mathcal{F}^{W,J},\mathbb{P}^c)$ is the product space $\left(\Omega^W\!\!\times\Omega^J,\mathcal{F}^W\otimes\mathcal{F}^J,\mathbb{P}^W\!\!\otimes\mathbb{P}^J\right)$. The canonical space for the Brownian part which we denote by $(\Omega^W,\mathcal{F}^W,\mathbb{P}^W)$ is the space of continuous functions endowed with the $\sigma$-algebra generated by the topology of uniform convergence on compact sets, and $\mathbb{P}^W$ is the Wiener measure. The canonical space for the jump part $(\Omega^J,\mathcal{F}^J,\mathbb{P}^J)$ makes use of a partition $(S_k)_{k\geq 1}$ of $\mathbb{R}\setminus\{0\}$, with $0\leq\nu(S_k)<\infty$. Moreover, assume that we remove the $S_k$ from the sequence with $\nu(S_k)=0$ and correct the numbering. If there remains only a finite sequence, we are in the compound Poisson case and would not need the whole following construction. However, one may nevertheless continue with the whole procedure by taking then all the infinite products and sums that appear in connection with this sequence as finite. Moreover, without loss of generality, assume that $S_1=\{x\in\R : 1< |x|\}$. The first step is to establish the space for a compound Poisson process on the interval ${[0,T]}$: Define $(\Omega^k_T,\mathcal{F}^k_T,\mathbb{P}^k_T)$ by taking $\Omega^k_T=\bigcup_{n\geq 0}({[0,T]\times S_k})^n$, with the convention that $({[0,T]\times S_k})^0=\{\alpha\}$, where $\alpha$ denotes a distinguished element representing the empty sequence. The $\sigma$-algebra $\mathcal{F}^k_T$ we define as $\bigvee_{n\geq 0}\mathcal{B}\left(({[0,T]\times S_k})^n\right)$ and for $B=\bigcup_{n\geq 0}B_n$, $B_n \in \mathcal{B}\left(({[0,T]\times S_k})^n\right)$, we set $$\mathbb{P}^k_T(B)=e^{-\nu(S_k)T}\sum_{n=0}^\infty\frac{\nu(S_k)^n(\lambda\otimes Q_k)^{\otimes n}(B_n)}{n!},$$
with $Q_k=\nu(\cdot\cap S_k)/\nu(S_k)$ and $(\lambda\otimes Q_k)^0=\delta_\alpha$. This yields that the process
$$X^k_{T,t}(\omega)=\begin{cases}\sum_{j=1}^n x_j\one_{[0,t]}(t_j), & \text{ if }\omega=((t_1,x_1),\dotsc,(t_n,x_n))\\ 0, & \text{ if }\omega=\alpha\end{cases}$$
is compound Poisson with L\'evy measure $\nu(\cdot\cap S_k)$ on the interval ${[0,T]}$. The elements $\omega$ are thus lists containing jumping times and jump sizes of $X^k_{T,\cdot}$. By symmetrizing the $\sigma$-algebras $\mathcal{F}^k_T$ and by a projective limit construction, taking these spaces with $T=1,2,\dotsc$ one extends this approach to a probability space $(\Omega^k,\mathcal{F}^k,\mathbb{P}^k)$ for the case of the infinite time interval ${[0,\infty[}$. With this construction, $\omega$ (up to the empty sequence) can then be interpreted to be an infinite list $\left((t_1,x_1),(t_2,x_2),\dotsc\right)$. Again, one then defines the canonic compound Poisson process as
$$X^k_{t}(\omega)=\begin{cases}\sum_{j=1}^n x_j\one_{[0,t]}(t_j), & \text{ if }\omega=((t_1,x_1),(t_2,x_2),\dotsc)\\ 0, & \text{ if }\omega=\alpha\end{cases},$$
for a detailed description of $(\Omega^k,\mathcal{F}^k,\mathbb{P}^k)$ see \cite[Section 4]{suv2}. For the L\'evy triplet $(\gamma,\sigma,\nu)$ the canonical probability space for the jump part $(\Omega^J,\mathcal{F}^J,\mathbb{P}^J)$ is defined by
$$(\Omega^J,\mathcal{F}^J,\mathbb{P}^J)=\bigotimes_{k \geq 1}(\Omega^k,\mathcal{F}^k,\mathbb{P}^k),$$
and the jump part of the canonical L\'evy process is
\begin{equation}\label{canproc}
X^J_t=\lim_{n\to\infty}\sum_{k=2}^n\left(X^k_t(\omega^k)-t\int_{S_k}x\nu(dx)\right)+X^1_t(\omega^1),
\end{equation}
for $\omega=(\omega^k)_{k\geq 1}$, where the convergence is $\mathbb{P}^J$-a.s., uniform on $t\in{[0,T]}$ for any $T>0$. Furthermore, in \cite[Proposition 4.8.]{suv2} it is stated, that the operation $${[0,\infty[}\times(\R\setminus\{0\})\times\Omega^J\to\Omega^J,\quad\left((r,v),\omega\right)\mapsto\omega_{r,v},$$
is measurable, where
\begin{equation}\label{omegaext}
\omega_{r,v}=\left((r,v),(t_1,x_1),\dotsc\right)\quad\text{if}\quad\omega=\left((t_1,x_1),(t_2,x_2),\dotsc\right).
\end{equation}

%%%%%%%%%%%%%%%%%%%%%%%%%%%%%%%%%%%%%%%%%%%%
\section{Representation of processes by functionals}\label{furep}

Consider the following representation lemma for random variables:
\begin{lem}\label{replem}
Let $Y$ be a random variable on $(\Omega,\mathcal{F},\mathbb{P})$. Then there exists a measurable functional $F\colon D\left({[0,\infty[}\right)\to\R$, such that
\begin{equation*}
Y=F\left((X_t)_{t\geq 0}\right),\ \mathbb{P}\text{-a.s.}
\end{equation*}
\end{lem}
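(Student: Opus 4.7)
The plan is to obtain the functional $F$ by a Doob--Dynkin factorization of $Y$ through the path map of $X$, after reducing from the completed $\sigma$-algebra $\mathcal{F}$ to $\mathcal{F}^X$.

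First I would define the path map $\Phi\colon\Omega\to D([0,\infty[)$ by $\Phi(\omega):=(X_t(\omega))_{t\geq 0}$. Since $X$ is a L\'evy process, the trajectories are c\`adl\`ag, so $\Phi$ indeed takes values in $D([0,\infty[)$. To see that $\Phi$ is $\mathcal{F}^X/\mathcal{B}(D([0,\infty[))$-measurable, I would use the fact, recalled in the bullet list of Section \ref{set} (citing Ethier--Kurtz), that the Borel $\sigma$-algebra on $D([0,\infty[)$ coincides with $\sigma(p_t: t\geq 0)$, the $\sigma$-algebra generated by the coordinate projections. Hence it suffices to check that $p_t\circ\Phi=X_t$ is $\mathcal{F}^X$-measurable for every $t\geq 0$, which is immediate from the definition of $\mathcal{F}^X$. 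The same coincidence of $\sigma$-algebras then gives the matching identity
\begin{equation*}
\mathcal{F}^X=\sigma(X_t:t\geq 0)=\Phi^{-1}\bigl(\sigma(p_t:t\geq 0)\bigr)=\Phi^{-1}\bigl(\mathcal{B}(D([0,\infty[))\bigr).
\end{equation*}

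Next I would handle the completion. By assumption $\mathcal{F}$ is the $\mathbb{P}$-completion of $\mathcal{F}^X$, so any $\mathcal{F}$-measurable real random variable $Y$ admits an $\mathcal{F}^X$-measurable modification $\widetilde Y$ with $Y=\widetilde Y$ $\mathbb{P}$-a.s. Applying the Doob--Dynkin lemma to $\widetilde Y$ and the measurable map $\Phi$, using the identity of $\sigma$-algebras above, yields a Borel-measurable functional $F\colon D([0,\infty[)\to\R$ with $\widetilde Y=F\circ\Phi$ everywhere, hence $Y=F((X_t)_{t\geq 0})$ $\mathbb{P}$-a.s.

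The only non-routine point is the identification $\mathcal{F}^X=\Phi^{-1}(\mathcal{B}(D([0,\infty[)))$, which hinges on the non-trivial fact that on the Skorohod space the Borel $\sigma$-algebra of the Skorohod topology agrees with the cylindrical $\sigma$-algebra generated by the evaluations $p_t$; fortunately this is exactly the fact that the paper already imports from \cite{EKurtz} in the setting list, so no additional work is needed. Everything else reduces to the standard Doob--Dynkin factorization plus the standard replacement of an $\mathcal{F}$-measurable function by an $\mathcal{F}^X$-measurable modification.
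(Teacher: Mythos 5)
Your proposal is correct and follows essentially the same route as the paper: replace $Y$ by an $\mathcal{F}^X$-measurable modification $\tilde Y$ (using that $\mathcal{F}$ is the completion of $\mathcal{F}^X$) and then factor $\tilde Y$ through the path map via the Doob--Dynkin/factorization lemma, with the identification $\mathcal{B}(D([0,\infty[))=\sigma(p_t:t\geq 0)$ from Ethier--Kurtz ensuring that $\sigma(\Phi)=\mathcal{F}^X$. The paper merely cites the factorization lemma of Bauer or Yong--Zhou instead of spelling out these measurability details, so your write-up is a faithful elaboration of the same argument.
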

The assertion follows from the fact that the random variable $Y$ equals an $\mathcal{F}^X$-measurable random variable $\tilde Y$ up to a $\mathbb{P}$-null set and the factorization lemma stated in \cite[Lemma II.11.7]{bauer} or the one in \cite[Theorem 1.1.7]{yongzhou}. We should also mention \cite{delzeith}, where filtrated Borel spaces are treated. Note that the functional $F$ in Lemma \ref{replem} is in general only unique $\mathbb{P}_X$-a.s, where $\mathbb{P}_X$ denotes the pushforward measure for the map $m_X\colon\Omega\to D\left({[0,\infty[}\right), \omega\mapsto \left(X_t(\omega)\right)_{t\geq 0}$.\bigskip

This section is dedicated to an extension of Lemma \ref{replem} for random processes which depend on an additional parameter $u$ taken from a measurable space $(U,\mathcal{U})$, which will be crucial in the latter part of the paper. A functional representation in this sense can easily be found by the factorization lemmas in \cite{bauer}, \cite{yongzhou} again for $\mathcal{F}^X\otimes\mathcal{U}$-measurable processes $Y\colon \Omega \times U\to\R$. However, here we have in mind the functional representation of an $\mathcal{F}\otimes\mathcal{U}$-measurable process, thus investigating the completion of $\mathcal{F}^X$. Since the $\sigma$-algebra generated by an $\mathcal{F}\otimes\mathcal{U}$-measurable process does not have to be contained in $\mathcal{F}^X\otimes\mathcal{U}$, we cannot apply the factorizations given in \cite{bauer} and \cite{yongzhou}. To overcome this problem, we need the next Lemma:
\begin{lem}\label{paramreplem}
Let $(G,\mathcal{G})$ be a measurable space, $\mathcal{N}$ be a $\sigma$-ideal of $(G,2^G)$ (for the definition of a $\sigma$-ideal see \cite{bauer}) and let $Y$ be an $(\mathcal{\mathcal{G}\vee \mathcal{N}})\otimes\mathcal{U}$-measurable mapping $Y\colon G\times U\to\R$. Then there exists an $G\otimes\mathcal{U}$-measurable mapping $\tilde Y\colon G\times U\to \R$ such that there exists a set $N\in\mathcal{N}$ and for $x\notin N$ it holds that
\begin{equation}\label{indist}
Y(x,\cdot)=\tilde Y(x,\cdot).
\end{equation}
\end{lem}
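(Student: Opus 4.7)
The plan is a fibrewise almost-everywhere modification argument: I will first establish the statement for indicator functions via a $\sigma$-algebra argument, and then extend it to arbitrary measurable $Y$ by simple-function approximation. A preliminary observation is a description of $\mathcal{G}\vee\mathcal{N}$: since $\mathcal{N}$ is a $\sigma$-ideal in $2^G$, closure under complements and countable unions shows that
\[
\mathcal{G}\vee\mathcal{N}=\{A\triangle M:A\in\mathcal{G},\,M\in\mathcal{N}\},
\]
so every $E\in\mathcal{G}\vee\mathcal{N}$ agrees with some $E'\in\mathcal{G}$ outside a set in $\mathcal{N}$.

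Let $\mathcal{H}$ denote the family of sets $C\in(\mathcal{G}\vee\mathcal{N})\otimes\mathcal{U}$ for which there exist $\tilde C\in\mathcal{G}\otimes\mathcal{U}$ and $N\in\mathcal{N}$ with $C\triangle\tilde C\subseteq N\times U$. Every generating rectangle $(A\triangle M)\times B$ with $A\in\mathcal{G}$, $M\in\mathcal{N}$, $B\in\mathcal{U}$ belongs to $\mathcal{H}$ via the choice $\tilde C=A\times B$, $N=M$. The family $\mathcal{H}$ is closed under complementation (since $C^c\triangle\tilde C^c=C\triangle\tilde C$) and under countable unions, because
\[
\Bigl(\bigcup_n C_n\Bigr)\triangle\Bigl(\bigcup_n\tilde C_n\Bigr)\subseteq\bigcup_n(C_n\triangle\tilde C_n)\subseteq\Bigl(\bigcup_n N_n\Bigr)\times U,
\]
and $\bigcup_n N_n\in\mathcal{N}$. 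Therefore $\mathcal{H}$ is a $\sigma$-algebra containing the product-generators, so $\mathcal{H}=(\mathcal{G}\vee\mathcal{N})\otimes\mathcal{U}$.

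For $Y=\one_C$ with $C\in(\mathcal{G}\vee\mathcal{N})\otimes\mathcal{U}$, setting $\tilde Y=\one_{\tilde C}$ with $\tilde C$ and $N$ furnished by the previous step gives $Y(x,\cdot)=\tilde Y(x,\cdot)$ for $x\notin N$. Linearity extends this to simple $(\mathcal{G}\vee\mathcal{N})\otimes\mathcal{U}$-measurable functions, with exceptional set the finite union of those of the summands. For general $Y$, choose simple approximants $Y_n\to Y$ pointwise on $G\times U$, pick $(\tilde Y_n,N_n)$ as above, and set $N:=\bigcup_n N_n\in\mathcal{N}$. Off $N\times U$ one has $\tilde Y_n=Y_n$ for every $n$, hence $\tilde Y_n\to Y$ there. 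Defining
\[
\tilde Y(x,u)=\begin{cases}\lim_{n\to\infty}\tilde Y_n(x,u), & \text{if this limit exists in }\R,\\ 0, & \text{otherwise,}\end{cases}
\]
yields a $\mathcal{G}\otimes\mathcal{U}$-measurable function satisfying $\tilde Y(x,\cdot)=Y(x,\cdot)$ for all $x\notin N$. The delicate point dictating the architecture of the proof is the requirement that the exceptional set be a rectangle $N\times U$ uniform in the parameter $u$; this is precisely why the reduction must be carried out at the level of sets rather than fibrewise for each $u$, and why the approximation step is restricted to countably many $Y_n$ so that $\bigcup_n N_n$ remains in the $\sigma$-ideal $\mathcal{N}$.
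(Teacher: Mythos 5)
Your proof is correct, and while it follows the same overall architecture as the paper (indicators, then simple functions, then pointwise limits, with the exceptional set forced to be a rectangle $N\times U$), the two technical pillars are obtained by genuinely different means. Where the paper proceeds in three separate steps --- identifying $(\mathcal{G}\vee\mathcal{N})\otimes\mathcal{U}$ with $(\mathcal{G}\otimes\mathcal{U})\vee(\mathcal{N}\otimes\mathcal{U})$, proving via a ``principle of appropriate sets'' argument that every element of the $\sigma$-ring $\mathcal{N}\otimes\mathcal{U}$ is contained in a product $N_1\times B_1$, and then deducing that indicators of such sets vanish off a set in $\mathcal{N}$ --- you compress all of this into a single good-sets class $\mathcal{H}$ of those $C$ admitting $\tilde C\in\mathcal{G}\otimes\mathcal{U}$ with $C\triangle\tilde C\subseteq N\times U$, and verify directly that $\mathcal{H}$ is a $\sigma$-algebra containing the generating rectangles; this is shorter and isolates exactly the property needed (a uniform-in-$u$ exceptional set), at the cost of being slightly less informative about the structure of $(\mathcal{G}\vee\mathcal{N})\otimes\mathcal{U}$ itself. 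The second difference is at the limit stage: the paper defines $\tilde Y$ only on $(G\setminus N)\times U$ and invokes Shortt's extension theorem to produce a globally $\mathcal{G}\otimes\mathcal{U}$-measurable map, whereas you define $\tilde Y$ as $\lim_n\tilde Y_n$ on the (globally measurable) set where the limit exists and $0$ elsewhere, which renders the appeal to Shortt unnecessary. Both routes are valid; yours is the more economical, and I see no gap in it.
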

\begin{rem}
 If in the situation of Lemma \ref{paramreplem} $\mathcal{N}$ is the set of $\mu$-null sets of a probability space $(G,\mathcal{G},\mu)$, then $\mathcal{G}\vee\mathcal{N}$ is the completion of $\mathcal{G}$. Equation (\ref{indist}) then means that the two processes $Y$ and $\tilde Y$ are indistinguishable. Also the case that $\mathcal{N}$ is the set of $\mu$-null sets of a $\sigma$-algebra which is also a domain of $\mu$ and contains $\mathcal{G}$ is covered by the lemma. This situation happens for example if augmentations of filtrations are considered.
\end{rem}
Taking $(G,\mathcal{G},\mu)=(\Omega,\mathcal{F}^X,\mathbb{P})$ and $\mathcal{N}$ to be the $\mathbb{P}$-null sets of $\Omega$, this Lemma directly implies the following Theorem \ref{repthm}, which we formulate before proving Lemma \ref{paramreplem}.
\begin{thm}\label{repthm}
Let $Y$ be an $\mathcal{F}\otimes\mathcal{U}$-measurable process $Y\colon\Omega\times U\to\R$. Then there exists a measurable functional $F\colon D\left({[0,\infty[}\right)\times U \to \R$ such that
\begin{equation*}
Y(\omega,\cdot)=F\left((X_t(\omega))_{t\geq 0},\cdot \right),
\end{equation*}
for almost all $\omega\in\Omega$.
\end{thm}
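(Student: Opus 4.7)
The strategy is to combine Lemma \ref{paramreplem} with a parametrized version of the classical factorization lemma. First, I would set $(G,\mathcal{G},\mu)=(\Omega,\mathcal{F}^X,\mathbb{P})$ and take $\mathcal{N}$ to be the collection of $\mathbb{P}$-null subsets of $\Omega$. Since $\mathcal{F}$ is by assumption the completion of $\mathcal{F}^X$, the remark following Lemma \ref{paramreplem} identifies $\mathcal{F}$ with $\mathcal{F}^X\vee\mathcal{N}$. Applying Lemma \ref{paramreplem} to the given process $Y$ then produces an $\mathcal{F}^X\otimes\mathcal{U}$-measurable mapping $\tilde Y\colon\Omega\times U\to\R$ together with a $\mathbb{P}$-null set $N$ such that $Y(\omega,\cdot)=\tilde Y(\omega,\cdot)$ for every $\omega\notin N$.

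Next, I would produce a functional representing $\tilde Y$. Since $\mathcal{F}^X=\sigma(m_X)$ with $m_X\colon\Omega\to D\left({[0,\infty[}\right)$, the product $\sigma$-algebra $\mathcal{F}^X\otimes\mathcal{U}$ coincides with the $\sigma$-algebra generated by the map
\[
(m_X,\mathrm{id}_U)\colon\Omega\times U\to D\left({[0,\infty[}\right)\times U,
\]
where the target carries $\mathcal{B}(D({[0,\infty[}))\otimes\mathcal{U}$. The classical factorization lemma from \cite{bauer,yongzhou}, applied to this map and to the real-valued, $\mathcal{F}^X\otimes\mathcal{U}$-measurable $\tilde Y$, yields a measurable $F\colon D\left({[0,\infty[}\right)\times U\to\R$ with
\[
\tilde Y(\omega,u)=F\bigl((X_t(\omega))_{t\geq 0},u\bigr)\quad\text{for all }(\omega,u)\in\Omega\times U.
\]
Combining the two steps, $Y(\omega,\cdot)=F((X_t(\omega))_{t\geq 0},\cdot)$ holds for every $\omega\in\Omega\setminus N$, which is the conclusion of the theorem.

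The substantive content of the argument therefore lies entirely inside Lemma \ref{paramreplem}; the factorization step is a routine consequence of the standard lemma once the completion has been stripped away. The only detail I would want to check explicitly is the identification $\sigma(m_X,\mathrm{id}_U)=\sigma(m_X)\otimes\mathcal{U}$, which is immediate by comparing generators: a measurable rectangle $A\times V\subset D\left({[0,\infty[}\right)\times U$ pulls back under $(m_X,\mathrm{id}_U)$ to $m_X^{-1}(A)\times V$, and such rectangles generate both $\sigma$-algebras. I expect the main obstacle of the overall development to lie in the proof of Lemma \ref{paramreplem} itself, in particular in removing the null sets \emph{uniformly} in the parameter $u\in U$, rather than in this short deduction of Theorem \ref{repthm}.
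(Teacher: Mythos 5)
Your proposal is correct and follows exactly the route the paper takes: apply Lemma \ref{paramreplem} with $(G,\mathcal{G},\mu)=(\Omega,\mathcal{F}^X,\mathbb{P})$ and $\mathcal{N}$ the $\mathbb{P}$-null sets to strip the completion, then invoke the classical factorization lemma for the now $\mathcal{F}^X\otimes\mathcal{U}$-measurable version via the map $(m_X,\mathrm{id}_U)$. Your explicit check that $\sigma(m_X,\mathrm{id}_U)=\sigma(m_X)\otimes\mathcal{U}$ is a detail the paper leaves implicit, but the argument is the same.
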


\begin{proof}[Proof of Lemma \ref{paramreplem}]
$ $\\
{\it 1st step:}\\
The $\sigma$-algebra $\mathcal{G}\vee \mathcal{N}$ can be expressed as $\mathcal{M}\!:=\!\left\{B\subseteq 2^G\!:\!\exists A\in \mathcal{G}\colon\! A\triangle B\in\mathcal{N}\right\}$.\\
{\it Proof:}\\
Since for all $A\in\mathcal{G}$ we have $A\triangle A=\emptyset\in\mathcal{N}$ we have that $\mathcal{G}$ is contained in $\mathcal{M}$. Because for all $N\in\mathcal{N}$ it holds that $\left(G\setminus N\right)\triangle G=N$, $\mathcal{M}$ also contains complements of sets in $\mathcal{N}$. By the formulas $$A\triangle B=\left(G\setminus A\right)\triangle\left(G\setminus B\right)$$ and $$\left(\bigcup_{n\in\N}A_n\right)\triangle\left(\bigcup_{n\in\N}B_n\right)\subseteq \bigcup_{n\in\N}\left(A_n\triangle B_n\right)$$ and using the property that $\mathcal{N}$ is a $\sigma$-ideal of $(G,2^G)$, it is easy to see that $\mathcal{M}$ is a $\sigma$-algebra. It remains to show that, if $\mathcal{H}$ is a $\sigma$-algebra containing $\mathcal{G}$ and $\mathcal{N}$, it also contains $\mathcal{M}$. To see this, we take $B\in\mathcal{M}$. Thus there is $A\in \mathcal{G}$ such that $A\triangle B\in\mathcal{N}$. Since $A\in\mathcal{H}$, we also get $(A\triangle B)\cup A=A\cup B\in\mathcal{H}$. By further elementary set operations it follows that $A\cap B, A\setminus B, B\setminus A$ are all in $\mathcal{H}$, as is then $B$.\bigskip

{\it 2nd step:}\\
Let $\mathcal{N}\otimes\mathcal{U}$ be defined as the smallest $\sigma$-ring which contains all products $N\times B$, $N\in\mathcal{N}$, $B\in \mathcal{U}$.
It holds that \begin{equation}\label{muelldeponie}
(\mathcal{G}\vee\mathcal{N})\otimes\mathcal{U}=(\mathcal{G}\otimes\mathcal{U})\vee(\mathcal{N}\otimes\mathcal{U})\subseteq(\mathcal{G}\otimes\mathcal{U})\vee\widehat{\mathcal{N}\otimes\mathcal{U}},
\end{equation}
where $\widehat{\mathcal{N}\otimes\mathcal{U}}$ denotes the set of all subsets of sets in $\mathcal{N}\otimes\mathcal{U}$, which is a $\sigma$-ideal of $(G\times U,2^{G\times U})$. Hence, the inclusion in equation (\ref{muelldeponie}) is trivial.\\
{\it Proof:}\\
We consider the generating sets: First, notice that
$$\left(\mathcal{G}\vee\mathcal{N}\right)\otimes\mathcal{U}=\sigma\left(A\times B: A\in \mathcal{G}\vee\mathcal{N}, B\in \mathcal{U}\right).$$
We show that each generator set $A\times B$ is contained in $(\mathcal{G}\otimes\mathcal{U})\vee(\mathcal{N}\otimes\mathcal{U})$. Indeed, since cartesian products are distributive over (arbitrary) unions and complements, it follows that each $A\times B$ as above is contained in the smallest $\sigma$-ring generated by the set of products $P:=\left\{C\times B : C\in \mathcal{G}\cup\mathcal{N}\right\}$, where we fixed $B\in\mathcal{U}$. The set of products $P$ and the $\sigma$-ring it generates are contained again in $(\mathcal{G}\otimes\mathcal{U})\vee(\mathcal{N}\otimes\mathcal{U})$ which implies $$(\mathcal{G}\vee\mathcal{N})\otimes\mathcal{U}\subseteq(\mathcal{G}\otimes\mathcal{U})\vee(\mathcal{N}\otimes\mathcal{U}).$$
On the other hand, $\mathcal{G}\times\mathcal{U}$ and $\mathcal{N}\times\mathcal{U}$ are both subsets of $\left(\mathcal{G}\vee\mathcal{N}\right)\otimes\mathcal{U}$ which yields the other inclusion.\bigskip

{\it 3rd step:}\\
Let $\mathcal{A}$ and $\mathcal{B}$ be $\sigma$-rings. Then for all sets $M$ in the product $\sigma$-ring $\mathcal{A}\otimes\mathcal{B}$ there is $A\in\mathcal{A}$ and $B\in\mathcal{B}$ such that $M\subseteq A\times B$. This can instantly be applied to the $\sigma$-ring $\mathcal{N}\otimes\mathcal{U}$ and the assertion then also holds for all subsets of sets in $\mathcal{N}\otimes\mathcal{U}$, which means that it holds for all sets in $\widehat{\mathcal{N}\otimes\mathcal{U}}$.\\
{\it Proof:}\\
This proof is an application of the ''principle of appropriate sets`` (see Shiryaev, \cite[\S 2. Theorem 1]{shiraev}). We define the set system
$$\mathcal{K}:=\left\{M\in \mathcal{A}\otimes\mathcal{B}: \exists (A,B)\in\mathcal{A}\times \mathcal{B}: M\subseteq A\times B\right\}.$$
Clearly, $\mathcal{K}$ contains all products of the form $A_0\times B_0$, $A_0\in\mathcal{A}$, $B_0\in \mathcal{B}$. We now show that $\mathcal{K}$ is a $\sigma$-ring:

Take $M_1, M_2\in \mathcal{K}$. Then $M_1$ is contained in a product $A_1\times B_1$. But then also $M_1\setminus M_2\subset M_1$ is contained in $A_1\times B_1$. Thus $M_1\setminus M_2\in \mathcal{K}$.

Now take a sequence $(M_n)_{n\in \N}$ in $\mathcal{K}$. So there are products $A_n\times B_n$ containing $M_n$. Then the inclusion
$$\bigcup_{n\in\N}M_n\subseteq\bigcup_{n\in\N}(A_n\times B_n)\subseteq\left(\bigcup_{n\in\N} A_n\right)\times\left(\bigcup_{n\in\N}B_n\right),$$
shows the $\sigma$-property of $\mathcal{K}$. Since $\mathcal{K}$ contains $\left\{A\times B: A\in \mathcal{A}, B\in \mathcal{B}\right\}$, $\mathcal{K}$ equals $\mathcal{A}\otimes\mathcal{B}$ and the step is proven.\bigskip

{\it 4th step:}\\
Let $N$ be a set in $\widehat{\mathcal{N}\otimes \mathcal{U}}$. Then the characteristic function $\one_N$ is the zero function in $u\in U$ up to a set in $\mathcal{N}$, i.e.
\begin{equation*}
\left\{x\in G : \exists u\in U :\one_N(x,u)\neq 0\right\}\in\mathcal{N}.
\end{equation*}

{\it Proof:}\bigskip

From the 3rd step it follows that for $N\in \widehat{\mathcal{N}\otimes \mathcal{U}}$, there is a product $N_1\times B_1$ with $N_1\in \mathcal{N}$ and $B_1\in \mathcal{U}$ such that $N\subseteq N_1\times B_1$. We get
\begin{equation*}
\left\{x\!\in G : \!\exists u\!\in U :\one_N(x,u)\neq 0\right\}\!\subseteq\!\left\{x\!\in G : \exists u\!\in U :\one_{N_1\times B_1}(x,u)\neq 0\right\}\!=\!N_1.
\end{equation*}

{\it 5th step:}\\
Let $Y\colon G\times U\to\R$ be a $(\mathcal{G}\vee\mathcal{N})\otimes\mathcal{U}$-measurable step-function,
$$Y=\sum_{k=1}^n\alpha_k\one_{A_k}.$$
Then there are sets $\check A_1,\dotsc,\check A_n\in \mathcal{G}\otimes\mathcal{U}$ and $N_1,\dotsc, N_{2n}\in \widehat{\mathcal{N}\otimes\mathcal{U}}$ such that
$$Y=\sum_{k=1}^n\alpha_k\one_{\check A_k}+\sum_{k=1}^{2n}\beta_k\one_{N_k}.$$

{\it Proof:}\\
By the 2nd step, the $A_k$ are $(\mathcal{G}\otimes\mathcal{U})\vee\widehat{\mathcal{N}\otimes\mathcal{U}}$-measurable. Thus, by the 1st step, there exists $\check A_k\in\mathcal{G}\otimes\mathcal{U}$, such that $A_k\triangle \check A_k\in \widehat{\mathcal{N}\otimes\mathcal{U}}$. Therefore the characteristic functions
 $\one_{A_k}$ have the form $\one_{A_k}=\one_{\check A_k} +\one_{A_k\setminus \check A_k}-\one_{\check A_k\setminus A_k}$, where both appearing set differences are elements of $\widehat{\mathcal{N}\otimes\mathcal{U}}$ for $k=1,\dotsc,n$. Therefore we can represent $Y$ by
\begin{equation*}
\sum_{k=1}^n\alpha_k\one_{A_k}=\sum_{k=1}^n\alpha_k\one_{\check A_k}+\sum_{k=1}^n\alpha_k\one_{A_k\setminus\check A_k}-\sum_{k=1}^n\alpha_k\one_{\check A_k\setminus A_k},
\end{equation*}
so taking $N_k=A_k\!\setminus\! \check A_k, k=1,\dotsc,n$ and $N_k=\check A_k\!\setminus\! A_k, k=n+1,\dotsc,2n$ yields the assertion.\bigskip

{\it Completion of the proof of Lemma \ref{paramreplem}}:\\
By the 4th and 5th step, we see that the assertion of Lemma \ref{paramreplem} already holds for measurable step-functions. Now, for an arbitrary $(\mathcal{G}\vee\mathcal{N})\otimes\mathcal{U}$-measurable function, we know that there exists a sequence $(Y_n)_{n\in\N}$ of measurable step-functions such that for all $(x,u)\in G\times U$, $Y_n(x,u)$ converges to $Y(x,u)$. Moreover, the 5th step implies that out of this sequence, one can construct a sequence $(\tilde Y_n)_{n\in \N}$ of $\mathcal{G}\otimes\mathcal{U}$-measurable functions, such that $$\tilde Y_n(x,\cdot)=Y_n(x,\cdot )$$
on $G\setminus N_n$, where $N_n\in\mathcal{N}$. Thus, on the set $G\setminus\left(\bigcup_{n\in \N}N_n\right)$ we have equality of the sequences of functions $$\left(Y_n(x,\cdot)\right)_{n\in \N}=\left(\tilde Y_n(x,\cdot)\right)_{n\in \N}.$$
Therefore we also know that $$\left\{x\in G: \forall u\in U :\left(\tilde Y_n(x,u)\right)_{n\in \N}\text{ converges}\right\}\!\supseteq G\setminus\left(\bigcup_{n\in \N}N_n\right),$$
and on this set it holds that $\lim_{n\to\infty}\tilde Y_n(x,u)=Y(x,u)$ for all $u\in U$. Define $N:=\bigcup_{n\in \N}N_n\in \mathcal{N}$. Then the function
\begin{equation*}
\tilde Y(x,u):=\lim_{n\to\infty}\tilde Y_n(x,u),
\end{equation*}
defined on $(G\setminus N)\times U$, is measurable with respect to the restricted $\sigma$-algebra $\left\{A\cap\left((G\setminus N)\times U\right) : A\in \mathcal{G}\otimes\mathcal{U}\right\}$ because all $\tilde Y_n\!\mid_{(G\setminus N)\times U}$ are. Thus, in view of \cite{shortt}, we can extend $\tilde Y$ to a $\mathcal{G}\otimes\mathcal{U}$- measurable mapping on $G\times U$. The resulting function satisfies the assertion of the lemma.
\end{proof}
%%%%%%%%%%%%%%%%%%%%%%%%%%%%%%%%%%%%%%%%
\section{Comparison of functionals of L\'evy processes on different probability spaces}\label{comp}

In this section we assume $\left(\Omega_1,\mathcal{F}_1,\mathbb{P}_1\right), \left(\Omega_2,\mathcal{F}_2,\mathbb{P}_2\right)$ to be probability spaces with L\'evy processes $X^i=(X^i_t)_{t\geq 0}$, $X^i_t\colon \Omega_i\to \R$, such that $X^i$ corresponds to a given L\'evy triplet $(\gamma,\sigma,\nu)$
for $i=1,2$. Furthermore, assume that $\mathcal{F}_i$ is the completion of the sigma algebra generated by $X^i$.
For the processes $X^1, X^2$, we get martingale valued measures $M^1,M^2$ and also families of multiple stochastic integrals $\left(I^1_n(f_n)\right)_{n\in\N}, \left(I^2_n(f_n)\right)_{n\in\N}$, respectively.

The following theorem states that random variables gained by applying the same functional to L\'evy processes which are defined on different probability spaces have the same integrands in the chaos expansion:
\begin{thm}\label{chaoseq}
Let $Y_1\in\Ltwo\left(\Omega_1,\mathcal{F}_1,\mathbb{P}_1\right)$, $Y_2 \in \Ltwo\left(\Omega_2,\mathcal{F}_2,\mathbb{P}_2\right)$ be random variables.
Suppose that $Y_1,Y_2$ have chaos decompositions
\begin{equation*}
Y_1=\sum_{n=0}^\infty I^1_n(f_n),\ \mathbb{P}_1\text{-a.s.},\quad Y_2=\sum_{n=0}^\infty I^2_n(g_n),\ \mathbb{P}_2\text{-a.s.}
\end{equation*}
with $f_n,g_n$ being symmetric functions in $\Ltwo^n$.

Assume that there is a measurable functional $F\colon D\left({[0,\infty[}\right)\to\R$
such that $Y_i=F\left((X^i_t)_{t\geq 0}\right)$, $\mathbb{P}_i$-a.s. for $i=1,2$. Then for all $n\in\N$ it holds that $$f_n=g_n,\quad\m^{\otimes n}\text{-a.e.}$$
\end{thm}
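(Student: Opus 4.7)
The natural strategy is to test $Y_1 - Y_2$ (conceptually) against each chaos in a way that only depends on the law of the Lévy process. Concretely, by orthogonality of the chaoses and the isometry $\E_i[I_n^i(h)^2] = n!\|h\|_{\Ltwo^n}^2$ (applied to symmetric $h$), one has
\begin{equation*}
\E_1\bigl[Y_1\, I_n^1(h)\bigr] = n!\langle f_n,h\rangle_{\Ltwo^n},\qquad \E_2\bigl[Y_2\, I_n^2(h)\bigr] = n!\langle g_n,h\rangle_{\Ltwo^n}
\end{equation*}
for every symmetric $h\in\Ltwo^n$. So it suffices to prove that the left-hand sides agree for every such $h$; symmetry of $f_n,g_n$ then forces $f_n=g_n$ in $\Ltwo^n$, i.e.\ $\m^{\otimes n}$-a.e.

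\textbf{Reduction to a common path-space integral.} Since $X^1$ and $X^2$ share the L\'evy triplet, their pushforwards on $D({[0,\infty[})$ coincide; call this common law $\PP_X$. The identity $Y_i=F((X^i_t)_{t\geq 0})$ already expresses the first factor via a functional common to both spaces. The crux is to do the same for the second factor: produce a single Borel functional $H_{n,h}\colon D({[0,\infty[})\to\R$ with
\begin{equation*}
I_n^i(h) = H_{n,h}\bigl((X^i_t)_{t\geq 0}\bigr)\quad\PP_i\text{-a.s.},\qquad i=1,2.
\end{equation*}
Once this is done, both sides of the test equality equal $\int F(x)\,H_{n,h}(x)\,d\PP_X(x)$, which finishes the argument.

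\textbf{Construction of $H_{n,h}$.} I would approximate: pick simple symmetric functions $h^{(k)}\to h$ in $\Ltwo^n$, each of the form of a finite linear combination of indicators of products of pairwise disjoint bounded Borel sets. For such $h^{(k)}$, $I_n^i(h^{(k)})$ is by definition an explicit polynomial in the values $M^i(B)$; and from the setup, each $M^i(B)$ is itself the same concrete Borel operation applied to the trajectory of $X^i$ (the Brownian part $W^i$ and the jump measure $N^i$ are measurably extracted from $X^i$ in a way that does not depend on the probability space). Lemma \ref{replem} then yields a single functional $H^{(k)}_n$ with $I_n^i(h^{(k)})=H^{(k)}_n(X^i)$ $\PP_i$-a.s.\ for $i=1,2$. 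The sequence $(H^{(k)}_n)_k$ is Cauchy in $\Ltwo(D({[0,\infty[}),\PP_X)$ because, by the isometry on each $\Omega_i$ and $\PP_{X^1}=\PP_{X^2}=\PP_X$, its $\Ltwo(\PP_X)$-norm of differences equals $\sqrt{n!}\,\|h^{(k)}-h^{(\ell)}\|_{\Ltwo^n}\to0$. Let $H_{n,h}$ be the $\Ltwo(\PP_X)$-limit, extracted along an a.s.-convergent subsequence; then $H_{n,h}(X^i)=\lim_j H^{(k_j)}_n(X^i)=I_n^i(h)$ in $\Ltwo(\PP_i)$ for each $i$, so $H_{n,h}$ is the required common functional.

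\textbf{Main obstacle.} The one really delicate point is this common functional representation of $I_n^i(h)$: for simple $h$ it is immediate from the definitions, but for general $h\in\Ltwo^n$ it hinges on the fact that, because $\PP_{X^1}=\PP_{X^2}$, an $\Ltwo$-convergent approximation on $(\Omega_1,\F_1,\PP_1)$ transfers verbatim to an $\Ltwo$-convergent approximation on $(\Omega_2,\F_2,\PP_2)$ and to a Cauchy sequence on the path space. Once the common $H_{n,h}$ is in hand, the conclusion is just the change-of-variables
\begin{equation*}
\E_i[F(X^i)\,H_{n,h}(X^i)] = \int_{D({[0,\infty[})} F\cdot H_{n,h}\, d\PP_X,
\end{equation*}
which is independent of $i$, and the theorem follows.
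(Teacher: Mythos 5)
Your overall strategy is viable and genuinely different from the paper's: you test against the multiple integrals $I^i_n(h)$ via the duality $\E_i[Y_iI^i_n(h)]=n!\langle f_n\ \text{resp.}\ g_n,h\rangle_{\Ltwo^n}$ and reduce everything to a single integral over $(D({[0,\infty[}),\mathbb{P}_X)$, whereas the paper approximates $Y_1$ itself by smooth cylindrical variables $\psi_n(X^1_{t_1^n},\dotsc,X^1_{t^n_{m_n}})$ (dense in $\Ltwo$ by Geiss--Laukkarinen), transfers the approximation to $\Omega_2$ by equality of the pushforward laws, and uses that the chaos integrands of such cylindrical variables are \emph{explicitly computable} and independent of the probability space. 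The paper's route deliberately keeps every approximating object a pointwise Borel function of finitely many coordinates of the path, so no limit has to be identified on the path space except the final one, which is controlled by a distributional identity.

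The gap in your argument sits exactly where you place the weight: the claim that there is a \emph{single} Borel functional $H^{(k)}_n$ with $I^i_n(h^{(k)})=H^{(k)}_n(X^i)$ $\mathbb{P}_i$-a.s.\ \emph{for both} $i=1,2$. Lemma \ref{replem} does not give this: applied on $\Omega_1$ it produces some $H^1$, applied on $\Omega_2$ some $H^2$, each unique only $\mathbb{P}_X$-a.e., and nothing in the lemma links the two choices. To link them you must show that the random variable $I^i_n(h^{(k)})$ is obtained from the path of $X^i$ by one and the same construction on both spaces; but $I_n$ of a simple function is a polynomial in the $M^i(B)$, and $M^i(B)$ is \emph{not} a pointwise operation on the trajectory --- it is defined through an $\Ltwo(\mathbb{P}_i)$-limit (and, when $\sigma>0$, requires first extracting the Brownian part from $X^i$, itself an a.s.\ limit). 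So the "same concrete Borel operation" assertion is precisely what has to be proved, at every limit stage. The repair is the transfer principle you gesture at: carry out the entire construction of $M$, of $I_n$ on simple functions, and of the $\Ltwo$-closure once and for all on the canonical path space $(D({[0,\infty[}),\mathcal{B},\mathbb{P}_X)$ with the coordinate process, and check that each $\Ltwo(\mathbb{P}_X)$-limit pulls back along the measure-preserving maps $m_{X^i}$ to the corresponding $\Ltwo(\mathbb{P}_i)$-limit defining $M^i(B)$ and $I^i_n(h)$. This is true and elementary, but it is the actual content of the step and must be written out; as it stands the proof asserts it rather than proves it. Once that is supplied, the duality argument and the final change of variables are correct.
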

\begin{proof}
The random variables of the form $\psi\left(X^1_{t_1},\dotsm,X^1_{t_m}\right)$ with $t_1,\dotsm,t_m\geq 0$, $\psi \in \mathcal{C}^\infty_c(\R^m)$, $m\in\N$, are dense in $\Ltwo\left(\Omega_1,\mathcal{F}_1,\mathbb{P}_1\right)$ (see \cite[Corollary 4.1]{geissc}). Thus we can approximate $Y_1$ by a sequence of the type $\left(\psi_n\left(X^1_{t^n_1},\dotsm,X^1_{t^n_{m_n}}\right)\right)_{n\in\N}$, $\psi_n\in \mathcal{C}^\infty_c(\R^{m_n})$:
\begin{equation*}
\lim_{n\to\infty}\E_1 \left|Y_1-\psi_n\left(X^1_{t^n_1},\dotsm,X^1_{t^n_{m_n}}\right)\right|^2=0,
\end{equation*}
with $\E_1$ being the expectation with respect to $\mathbb{P}_1$.
By assumption we get
\begin{equation}\label{equaldistr}
\E_1 \left|Y_1-\psi_n\left(X^1_{t^n_1},\dotsm,X^1_{t^n_{m_n}}\right)\right|^2=\E_1 \left|F\left((X^1_t)_{t\geq 0}\right)-\psi_n\left(X^1_{t^n_1},\dotsm,X^1_{t^n_{m_n}}\right)\right|^2,
\end{equation}
and since the mappings $m_X^i\colon\Omega_i\to D\left({[0,\infty[}\right), \omega\mapsto\left(X^i_t(\omega)\right)_{t\geq 0}$ yield the same pushforward measures for $i=1,2$ the distributions of
$$\left|F\!\left((X^1_t)_{t\geq 0}\right)\!-\!\psi_n\! \left(X^1_{t^n_1},\dotsm,X^1_{t^n_{m_n}}\right)\right|^2\!\text{ and }\!\left|F\!\left((X^2_t)_{t\geq 0}\right)\!-\!\psi_n\!\left(X^2_{t^n_1},\dotsm,X^2_{t^n_{m_n}}\right)\right|^2$$ coincide.
Hence $Y_2$ can be approximated in the $\Ltwo$-norm by the same sequence of smooth random variables on the probability space $\left(\Omega_2,\mathcal{F}_2,\mathbb{P}_2\right)$.

The integrands in the chaos expansion of $\psi_n\left(X^i_{t^n_1},\dotsm,X^i_{t^n_{m_n}}\right)$, $i=1,2$ can be computed explicitly (see \cite[below Lemma 3.1]{geissc}) and are the same, independent of the choice of the probability space. Thus, taking $\Ltwo$-limits, the integrands of the chaos expansions of $Y_1$ and $Y_2$ are equal as well.
\end{proof}

The previous theorem can be extended to square integrable random fields:
\begin{cor}
Let $(E,\mathcal{E},\rho)$ be a $\sigma$-finite measure space and let $$C^1\in\Ltwo\left(\Omega_1\times E,\mathcal{F}_1\otimes\mathcal{E},\mathbb{P}_1\otimes\rho\right),$$ $$C^2\in\Ltwo\left(\Omega_2\times E,\mathcal{F}_2\otimes \mathcal{E},\mathbb{P}_2\otimes\rho\right)$$
and suppose that these random fields have chaos decompositions
\begin{equation*}
C^1=\sum_{n=0}^\infty I^1_n(f_n),\ \mathbb{P}_1\otimes\rho\text{-a.e.},\quad C^2=\sum_{n=0}^\infty I^2_n(g_n),\ \mathbb{P}_2\otimes\rho\text{-a.e.}
\end{equation*}
for $f_n,g_n$ being functions in $\Ltwo(E,\mathcal{E},\rho)\hat\otimes\Ltwo^{n}$ which are symmetric in the last $n$ variables, where '$\hat \otimes$' denotes the Hilbert space tensor product.

Assume that for $\rho$-almost all $e\in E$ there are functionals $$F_{e}\colon D\left({[0,\infty[}\right)\to\R$$
such that $C^i(e)=F_{e}\left((X^i_t)_{t\geq 0}\right)$, $\mathbb{P}_i$-a.s. for $i=1,2$. Then for all $n\in\N$ it holds $f_n=g_n$, $\rho\otimes\m^{\otimes n}$-a.e.
\end{cor}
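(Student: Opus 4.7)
The strategy is to reduce the statement to Theorem \ref{chaoseq} applied slicewise in the parameter $e \in E$. By Fubini, $C^i(\cdot, e) \in \Ltwo(\Omega_i, \mathcal{F}_i, \mathbb{P}_i)$ for $\rho$-a.e.\ $e$, so each slice is a bona fide $\Ltwo$-random variable with its own chaos decomposition.

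The technical heart of the proof is identifying the chaos integrands of the slice $C^i(\cdot, e)$ with the slice $f_n(e, \cdot)$ of the field integrand (and analogously for $g_n$). Using the natural identification $\Ltwo(E, \mathcal{E}, \rho)\hat\otimes\Ltwo^n \cong \Ltwo(E \times ([0,\infty[\times\R)^n, \rho \otimes \m^{\otimes n})$, one sees that $f_n(e, \cdot) \in \Ltwo^n$ for $\rho$-a.e.\ $e$. Combining the multiple-integral isometry with Fubini yields
\[
\int_E \E_i \Bigl|C^i(\cdot, e) - \sum_{n=0}^N I^i_n(f_n(e, \cdot))\Bigr|^2 \rho(de) \longrightarrow 0 \quad \text{as } N \to \infty,
\]
and passing to a subsequence converts this into $\Ltwo(\Omega_i)$-convergence for $\rho$-a.e.\ $e$. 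The symmetry-in-the-last-$n$-variables hypothesis on $f_n$ is precisely what ensures that $f_n(e, \cdot)$ is a legitimate symmetric chaos integrand, so by uniqueness of the chaos expansion on $(\Omega_i, \mathcal{F}_i, \mathbb{P}_i)$ the $f_n(e, \cdot)$ are \emph{the} chaos integrands of $C^i(\cdot, e)$.

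On the intersection of the countably many full-$\rho$-measure sets where the slicing holds for $i=1,2$ and where $C^i(\cdot, e) = F_e((X^i_t)_{t\geq 0})$ $\mathbb{P}_i$-a.s., Theorem \ref{chaoseq} applied to $Y_1 = C^1(\cdot, e)$, $Y_2 = C^2(\cdot, e)$ with the common functional $F_e$ gives $f_n(e, \cdot) = g_n(e, \cdot)$ $\m^{\otimes n}$-a.e., for every $n \in \N$. Taking a countable union of the exceptional $\rho$-null sets over $n$ and applying Fubini once more produces the desired $f_n = g_n$ $\rho \otimes \m^{\otimes n}$-a.e.

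The main obstacle I anticipate is the slicing step, namely converting the global chaos expansion of the field into a pointwise (in $e$) chaos expansion of each slice. This requires the tensor-product interpretation of the integrands, an isometry-plus-Fubini estimate, and a subsequence extraction to pass from $\Ltwo(\Omega_i \times E)$-convergence to $\Ltwo(\Omega_i)$-convergence for $\rho$-a.e.\ $e$; everything afterwards is a straightforward pointwise application of Theorem \ref{chaoseq}.
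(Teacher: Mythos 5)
Your proposal is correct and follows the same route as the paper: the paper's proof is a two-line application of Theorem \ref{chaoseq} slicewise in $e$, concluding $\|f_n(e,\cdot)-g_n(e,\cdot)\|_{\Ltwo^n}=0$ for $\rho$-a.e.\ $e$ and then integrating over $E$. The slicing step you flag as the main obstacle (identifying $f_n(e,\cdot)$ as the chaos integrands of $C^i(\cdot,e)$ via the isometry, Fubini, and a subsequence extraction) is left implicit in the paper, and your treatment of it is sound.
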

\begin{proof}
By the theorem before, we know that for $\rho$-almost every $e\in E$ we have that
\begin{equation*}
\left\|f_n\left(e,\cdot\right)-g_n\left(e,\cdot\right)\right\|_{\Ltwo^n}=0
\end{equation*}
and therefore also $\left\|f_n-g_n\right\|_{\Ltwo(E,\mathcal{E},\rho)\hat\otimes\Ltwo^{n}}=0$.
\end{proof}
For later use we formulate the corollary above in the case of $(E,\mathcal{E},\rho)=({[0,\infty[}\times\R,\mathcal{B}\left({[0,\infty[}\times\R\right),\m)$, which is used in Section \ref{appl}:
\begin{cor}\label{chaosequalcor}
Let $$C^1\in\Ltwo\left(\Omega_1\times{[0,\infty[}\times\R,\mathcal{F}_1\otimes\mathcal{B}\left({[0,\infty[}\times\R\right),\mathbb{P}_1\otimes\m\right),$$ $$C^2\in\Ltwo\left(\Omega_2\times{[0,\infty[}\times\R,\mathcal{F}_2\otimes\mathcal{B}\left({[0,\infty[}\times\R\right),\mathbb{P}_2\otimes\m\right)$$
and suppose that these random fields have chaos decompositions
\begin{equation*}
C^1=\sum_{n=0}^\infty I^1_n(f_n),\ \mathbb{P}_1\otimes\m\text{-a.e.},\quad C^2=\sum_{n=0}^\infty I^2_n(g_n),\ \mathbb{P}_2\otimes\m\text{-a.e.}
\end{equation*}
for $f_n,g_n$ being functions in $\Ltwo^{n+1}$ which are symmetric in the last $n$ variables.

Assume that for $\m$-almost all $(r,v)\in{[0,\infty[}\times\R$ there are functionals $$F_{r,v}\colon D\left({[0,\infty[}\right)\to\R$$
such that $C^i(r,v)=F_{r,v}\left((X^i_t)_{t\geq 0}\right)$, $\mathbb{P}_i$-a.s. for $i=1,2$. Then for all $n\in\N$ it holds $f_n=g_n$, $\m^{\otimes n+1}$-a.e.
\end{cor}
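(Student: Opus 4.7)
The plan is to obtain this corollary as a direct specialization of the preceding one with the choice $(E,\mathcal{E},\rho)=({[0,\infty[}\times\R,\mathcal{B}({[0,\infty[}\times\R),\m)$. The only prerequisite I need to check is $\sigma$-finiteness of $\m=\lambda\otimes\mu$ with $\mu=\sigma^2\delta_0+x^2\nu$. The exhausting sequence $[0,n]\times(\{0\}\cup\{1/n<|x|\leq n\})$ does the job: on $\{|x|\leq 1\}$ integrability of $x^2\nu$ is built into the L\'evy measure property $\int\min(1,x^2)\,\nu(dx)<\infty$, and on $\{1<|x|\leq n\}$ it follows from $x^2\leq n^2$ together with finiteness of $\nu$ off neighborhoods of $0$; the atom $\sigma^2\delta_0$ contributes only $n\sigma^2<\infty$ on each slab.

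Next I would invoke the canonical isometric isomorphism $\Ltwo({[0,\infty[}\times\R,\m)\hat\otimes\Ltwo^n\cong\Ltwo^{n+1}$ for $\sigma$-finite factor measures, under which the product measure $\m\otimes\m^{\otimes n}$ is identified with $\m^{\otimes(n+1)}$. Symmetry in the last $n$ variables in the preceding corollary corresponds exactly to the symmetry condition stated here, and the hypothesis $C^i(r,v)=F_{r,v}((X^i_t)_{t\geq 0})$, $\mathbb{P}_i$-a.s.\ for $\m$-a.e.\ $(r,v)$, is precisely the hypothesis of the preceding corollary with $e=(r,v)$.

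Applying that corollary then delivers $\|f_n-g_n\|_{\Ltwo({[0,\infty[}\times\R,\m)\hat\otimes\Ltwo^n}=0$, which via the identification above amounts to $f_n=g_n$ holding $\m^{\otimes(n+1)}$-a.e., as asserted. There is no serious obstacle here: the statement is a bookkeeping reformulation of the previous corollary in the measure-theoretic setting relevant to Section \ref{appl}, and the only technical ingredients — $\sigma$-finiteness of $\m$ and the $L^2$ tensor-product identification — are entirely standard.
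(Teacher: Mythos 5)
Your proposal matches the paper exactly: Corollary \ref{chaosequalcor} is stated there as the immediate specialization of the preceding corollary to $(E,\mathcal{E},\rho)=({[0,\infty[}\times\R,\mathcal{B}({[0,\infty[}\times\R),\m)$, with no further argument given. Your verification of the $\sigma$-finiteness of $\m$ and the identification $\Ltwo(\m)\hat\otimes\Ltwo^{n}\cong\Ltwo^{n+1}$ just makes explicit the bookkeeping the paper leaves implicit, and both steps are correct.
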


%%%%%%%%%%%%%%%%%%%%%%%%%%%%%%%%%%%%%%%%%%%%%%%%%%%%%%%%%%%%%%%%%%%%%%%%%%%%%%%
\section{Application to Malliavin calculus}\label{appl}

In this section we assume the probability space $\left(\Omega^c,\mathcal{F}^c,\mathbb{P}^c\right)$ to be the canonical probability space for a L\'evy process $X$ with triplet $(\gamma,0,\nu)$. Thus we work in the setting of a pure jump process from now on.\bigskip

There are various methods to define a Malliavin derivative for the jump part of a L\'evy process. We focus on three particular approaches, which yield the Malliavin derivative seen as a difference operator. One is, given the space $\left(\Omega^c,\mathcal{F}^c,\mathbb{P}^c\right)$, to define an operator
\begin{equation*}
\Psi_{r,v}Y(\omega)=\frac{Y(\omega_{r,v})-Y(\omega)}{v},\quad v\neq 0,
\end{equation*}
where $\omega_{r,v}$ is defined in (\ref{omegaext}) (see also \cite{suv2}) and we can think of it as the path $\omega$ where at time $r$ a jump of the size $v$ is added.

Another approach is to define the Malliavin derivative via chaos expansions, developed for example in \cite{suv}. This approach can be applied to arbitrary probability spaces $\left(\Omega,\mathcal{F},\mathbb{P}\right)$ in the sense of Section \ref{set}: Define
$$\mathbb{D}_{1,2}(\Omega):=\left\{Y\in\Ltwo\left(\Omega,\mathcal{F},\mathbb{P}\right)\colon \left\|Y\right\|^2_{\mathbb{D}_{1,2}}=\sum_{n=0}^\infty (n+1)!\left\|f_n\right\|^2_{\Ltwo^n}<\infty\right\}$$
and for a random variable $Y\in \mathbb{D}_{1,2}(\Omega)$ having chaos decomposition $$Y=\sum_{n=0}^\infty I_n(f_n),$$ let
$$\D_{r,v}Y:=\sum_{n=1}^\infty I_{n-1}\left(f_n((r,v),\cdot)\right)$$
for $\mathbb{P}\otimes\m-a.a.\ (\omega,r,v)\in\Omega\times{[0,\infty[}\times\left(\R\setminus\{0\}\right)$. The Malliavin derivative $\D Y$ is then an object in the space $$\Ltwo\left(\mathbb{P}\otimes\m\right)=\Ltwo\left(\Omega\times\!{[0,\infty[}\times\!\left(\R\!\setminus\!\{0\}\right),\mathcal{F}\otimes\mathcal{B}\left({[0,\infty[}\times\!\left(\R\!\setminus\!\{0\}\right)\right),\mathbb{P}\otimes\m\right).$$
A third method \cite{geissc} is to define an operator working on a dense set (in $\mathbb{D}_{1,2}(\Omega)$) of 'smooth' random variables $f\left(X_{t_1},\dotsm,X_{t_n}\right)$, $f\in\mathcal{C}^\infty_c\left(\R^n\right)$, $n\in\N$, defined by
\begin{equation*}
\begin{split}
\Phi_{r,v}&f\left(X_{t_1},\dotsm,X_{t_n}\right)\\
&:=\frac{f\left(X_{t_1}+v\one_{[0,t_1]}(r),\dotsm,X_{t_n}+\one_{[0,t_n]}(r)\right)-f\left(X_{t_1},\dotsm,X_{t_n}\right)}{v},
\end{split}
\end{equation*}
and extend it by linearity and continuity to random variables in $\mathbb{D}_{1,2}(\Omega)$. All the resulting objects of these methods coincide on the space $\mathbb{D}_{1,2}(\Omega^c)$, the latter ones also on $\mathbb{D}_{1,2}(\Omega)$ for an arbitrary $\left(\Omega,\mathcal{F},\mathbb{P}\right)$ with the conditions of Section \ref{set} (see \cite{suv2}, \cite{geissc}). Despite slight abuse of notation, we stick to '$\D$' denoting the derivative operator in all cases.
One aim of this section is to calculate the Malliavin derivative of a functional of a L\'evy process with methods developed in the section before. We then prove a 'chain rule-type' formula for random variables of the type $f(\cdot,Y(\cdot))$.
\begin{thm}\label{diffthm}
Assume $Y=F\left((X_t)_{t\geq 0}\right)\in\mathbb{D}_{1,2}(\Omega) $, $F\colon D\left({[0,\infty[}\right)\to\R$ measurable. Then for $\mathbb{P}\otimes\m$-a.a. $(\omega,r,v)\in\Omega\times{[0,\infty[}\times\left(\R\setminus\{0\}\right)$ it holds that
\begin{equation}\label{chain}
\D_{r,v}Y=\frac{F\left((X_t+v\one_{[0,t]}(r))_{t\geq 0}\right)-F\left((X_t)_{t\geq 0}\right)}{v}.
\end{equation}
\end{thm}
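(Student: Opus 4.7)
The plan is to reduce the assertion to the canonical probability space $(\Omega^c,\mathcal{F}^c,\mathbb{P}^c)$, where the right-hand side of (\ref{chain}) admits a direct interpretation via the jump-insertion operation $\omega\mapsto\omega_{r,v}$ from (\ref{omegaext}), and then transfer the resulting identity to the arbitrary probability space $(\Omega,\mathcal{F},\mathbb{P})$ with the help of Corollary \ref{chaosequalcor}.

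\textbf{Canonical space computation.} Set $Y^c:=F((X^c_t)_{t\geq 0})$. By Theorem \ref{chaoseq}, $Y$ and $Y^c$ share the same chaos integrands $f_n$, so $Y^c\in\mathbb{D}_{1,2}(\Omega^c)$ and the chaos expansions of $\D Y$ and $\D Y^c$ have identical integrands. Since on the canonical space the three approaches to the Malliavin derivative coincide,
\begin{equation*}
\D_{r,v}Y^c(\omega)=\frac{Y^c(\omega_{r,v})-Y^c(\omega)}{v}\qquad\mathbb{P}^c\otimes\m\text{-a.e.}
\end{equation*}
Inspecting (\ref{canproc}) and (\ref{omegaext}), inserting a jump $(r,v)$ with $v\in S_k$ into $\omega=(\omega^{k'})_{k'\geq 1}$ shifts only the $k$-th coordinate's compound Poisson process $X^k$ by $v\one_{[0,t]}(r)$, while the deterministic compensator $t\int_{S_k}x\,\nu(dx)$ and the remaining coordinates are unchanged. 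Therefore $X^c_t(\omega_{r,v})=X^c_t(\omega)+v\one_{[0,t]}(r)$ for every $t\geq 0$, which yields
\begin{equation*}
\D_{r,v}Y^c=\frac{F\bigl((X^c_t+v\one_{[0,t]}(r))_{t\geq 0}\bigr)-F\bigl((X^c_t)_{t\geq 0}\bigr)}{v}\qquad\mathbb{P}^c\otimes\m\text{-a.e.}
\end{equation*}

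\textbf{Transfer.} Define the jointly measurable family of functionals $F_{r,v}\colon D([0,\infty[)\to\R$ by
\begin{equation*}
F_{r,v}(\xi):=\frac{F(\xi+v\one_{[0,\cdot]}(r))-F(\xi)}{v},\qquad v\neq 0,
\end{equation*}
joint measurability in $(r,v,\xi)$ following from measurability of $F$ and of the shift map. Put $C(r,v):=F_{r,v}(X)$ on $(\Omega,\mathcal{F},\mathbb{P})$ and $C^c(r,v):=F_{r,v}(X^c)$ on $(\Omega^c,\mathcal{F}^c,\mathbb{P}^c)$. By the canonical computation, $C^c=\D Y^c\in\Ltwo(\mathbb{P}^c\otimes\m)$; because $X$ and $X^c$ induce the same pushforward measure on $D([0,\infty[)$, the variables $C(r,v)$ and $C^c(r,v)$ are equidistributed for each $(r,v)$, and Fubini gives $C\in\Ltwo(\mathbb{P}\otimes\m)$ with the same norm. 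Corollary \ref{chaosequalcor} applied to $C$ and $C^c$ then shows that their chaos integrands agree. Combined with the fact that $\D Y$ and $\D Y^c$ also have identical chaos integrands (by the canonical computation) and with $\D Y^c=C^c$, the chaos expansions of $\D Y$ and $C$ must coincide, and so $\D Y=C$ in $\Ltwo(\mathbb{P}\otimes\m)$, which is (\ref{chain}).

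\textbf{Main obstacle.} The most delicate point is the pointwise identification $X^c_t(\omega_{r,v})=X^c_t(\omega)+v\one_{[0,t]}(r)$: one must select a version of the canonical L\'evy process compatible with the shift (\ref{omegaext}), track the compensator terms in the infinite-sum representation (\ref{canproc}), and ensure the identity holds $\mathbb{P}^c\otimes\m$-a.e.\ rather than only $\mathbb{P}^c$-a.s.\ for each fixed $(r,v)$. These measurability and null-set matters are handled in \cite{suv2} and are precisely what make the operator identification $\Psi_{r,v}=\D_{r,v}$ valid on $\mathbb{D}_{1,2}(\Omega^c)$, on which the rest of the argument relies.
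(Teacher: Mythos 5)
Your proposal is correct and follows essentially the same route as the paper: reduce to the canonical space via Theorem \ref{chaoseq}, invoke the identification $\D=\Psi$ on $\mathbb{D}_{1,2}(\Omega^c)$ together with the pointwise identity $X^c_t(\omega_{r,v})=X^c_t(\omega)+v\one_{[0,t]}(r)$ from \cite{suv2}, and transfer back using the measurability of the shift and Corollary \ref{chaosequalcor}. The extra details you supply (the explicit family $F_{r,v}$, the equidistribution/Fubini step, and the remarks on the compensator) are consistent with, and slightly more explicit than, the paper's argument.
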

\begin{proof}
Define $Y^c:=F\left(\left(X^c_t\right)_{t\geq 0}\right)$. Thus $Y^c\in \mathbb{D}_{1,2}(\Omega^c)$ since the integrands of the chaos expansions of $Y$ and $Y^c$ are equal in $\Ltwo^n$ according to Theorem \ref{chaoseq}. Moreover, from the equality of $\D$ and $\Psi$ on $\mathbb{D}_{1,2}(\Omega^c)$ it follows that
\begin{equation*}
\D_{r,v}Y^c(\omega)=\Psi_{r,v}Y^c(\omega)=\frac{F\left(\left(X^c_t(\omega_{r,v})\right)_{t\geq 0}\right)-F\left(\left(X^c_t(\omega)\right)_{t\geq 0}\right)}{v},
\end{equation*}
$\mathbb{P}^c\otimes\m$-a.e.
By definition of $X^c$ and $\omega_{r,v}$ given in (\ref{canproc}) and (\ref{omegaext}) or \cite[Section 4.3. and 4.4.]{suv2}, we know that $X^c_t(\omega_{r,v})=X^c_t(\omega)+v\one_{[0,t]}(r)$, implying
\begin{equation*}
\D_{r,v}Y^c(\omega)=\frac{F\left(\left(X^c_t(\omega)+v\one_{[0,t]}(r)\right)_{t\geq 0}\right)-F\left(\left(X^c_t(\omega)\right)_{t\geq 0}\right)}{v},
\end{equation*}
$\mathbb{P}^c\otimes\m$-a.e.
Because translations $D({[0,\infty[})\to D({[0,\infty[}),\ h\mapsto h+k,\ k\in D({[0,\infty[})$, are measurable functions, we can use Corollary \ref{chaosequalcor}, which yields that $$\frac{F\left(\left(X^c_t+v\one_{[0,t]}(r)\right)_{t\geq 0}\right)-F\left(\left(X^c_t\right)_{t\geq 0}\right)}{v}$$ and $$\frac{F\left(\left(X_t+v\one_{[0,t]}(r)\right)_{t\geq 0}\right)-F\left(\left(X_t\right)_{t\geq 0}\right)}{v}$$
have the same integrands in their chaos expansion. Since $\D Y^c$ and $\D Y$ have the same integrands in their chaos expansion as well, the assertion follows.
\end{proof}
The next theorem proposes a chain rule-like application using the results so far. For this we rely on Theorem \ref{repthm} which justifies the existence of a measurable functional $G\colon D\left({[0,\infty[}\right)\times\R\to\R$ such that a $\mathcal{F}\otimes\mathcal{B}(\R)$-measurable function $f\colon\Omega\times\R\to\R$ can be represented by
\begin{equation}\label{repf}
x\mapsto f(\omega,x)= x\mapsto G(\left(X_t(\omega)\right)_{t\geq 0},x),\quad\mathbb{P}\text{-a.s.}
\end{equation}
\begin{thm}
Assume a $\mathcal{F}\otimes\mathcal{B}(\R)$-measurable process $f\colon\Omega\times\R\to\R$ such that for all $x\in\R$, the random variable
$\omega\mapsto f(\omega,x)$ is in $\mathbb{D}_{1,2}(\Omega)$ and let $G$ be as in (\ref{repf}). Let $Y$ be a random variable in $\mathbb{D}_{1,2}(\Omega)$, and assume that $f(\cdot,Y)\in\mathbb{D}_{1,2}(\Omega)$. Then the equation
\begin{equation*}
\begin{split}
\D_{r,v}f(\cdot,Y)(\omega)=&\left(\D_{r,v}f\right)\left(\omega,Y(\omega)+v\D_{r,v}Y(\omega)\right)\\
&+\frac{f\left(\omega,Y(\omega)+v\D_{r,v}Y(\omega)\right)-f(\omega,Y(\omega))}{v}
\end{split}
\end{equation*}
holds for $\mathbb{P}\otimes\m$-almost all $(\omega,r,v)\in\Omega\times{[0,\infty[}\times\left(\R\setminus\{0\}\right)$, where $\left(\D_{r,v}f\right)(\omega,x)$ denotes
\begin{equation}\label{fdefg}
\frac{G\left(\left(X_t(\omega)+v\one_{[0,t]}(r)\right)_{t\geq 0},x\right)-G\left(\left(X_t(\omega)\right)_{t\geq 0},x\right)}{v}.
\end{equation}
\end{thm}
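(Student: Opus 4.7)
The plan is to encode $f(\cdot,Y)$ as a single deterministic functional of $X$ and then reduce the claim to Theorem \ref{diffthm}. I first invoke Lemma \ref{replem} to write $Y=F((X_t)_{t\geq 0})$ $\mathbb{P}$-a.s.\ for some measurable $F\colon D({[0,\infty[})\to\R$. Combining this with the representation $f(\omega,x)=G((X_t(\omega))_{t\geq 0},x)$ which, by Theorem \ref{repthm}, holds for $\mathbb{P}$-a.a.\ $\omega$ simultaneously in all $x\in\R$, I define the measurable functional $H\colon D({[0,\infty[})\to\R$ by $H(h):=G(h,F(h))$. This composition of measurable maps is measurable, and
\begin{equation*}
f(\omega,Y(\omega))=G\bigl((X_t(\omega))_{t\geq 0},F((X_t(\omega))_{t\geq 0})\bigr)=H\bigl((X_t(\omega))_{t\geq 0}\bigr)
\end{equation*}
for $\mathbb{P}$-almost every $\omega$.

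Since $f(\cdot,Y)\in\mathbb{D}_{1,2}(\Omega)$ by hypothesis, I apply Theorem \ref{diffthm} to the representation $f(\cdot,Y)=H((X_t)_{t\geq 0})$ to obtain
\begin{equation*}
\D_{r,v}f(\cdot,Y)(\omega)=\frac{H\bigl((X_t(\omega)+v\one_{[0,t]}(r))_{t\geq 0}\bigr)-H\bigl((X_t(\omega))_{t\geq 0}\bigr)}{v}
\end{equation*}
$\mathbb{P}\otimes\m$-almost everywhere. A second, parallel application of Theorem \ref{diffthm} to $Y$ itself yields $F((X_t(\omega)+v\one_{[0,t]}(r))_{t\geq 0})=Y(\omega)+v\D_{r,v}Y(\omega)$ on a full-measure set.

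Substituting this identity into $H$ at the perturbed path and adding and subtracting $G((X_t(\omega))_{t\geq 0},Y(\omega)+v\D_{r,v}Y(\omega))$ produces the telescoping splitting
\begin{equation*}
\begin{split}
H\bigl((X_t+v\one_{[0,t]}(r))_{t\geq 0}\bigr)-H\bigl((X_t)_{t\geq 0}\bigr) &= G\bigl((X_t+v\one_{[0,t]}(r))_{t\geq 0},Y+v\D_{r,v}Y\bigr)\\
&\quad -G\bigl((X_t)_{t\geq 0},Y+v\D_{r,v}Y\bigr)\\
&\quad +G\bigl((X_t)_{t\geq 0},Y+v\D_{r,v}Y\bigr)-G\bigl((X_t)_{t\geq 0},Y\bigr).
\end{split}
\end{equation*}
After dividing by $v$, the first two terms form precisely $(\D_{r,v}f)(\omega,Y(\omega)+v\D_{r,v}Y(\omega))$ as prescribed by the definition (\ref{fdefg}), while the last two collapse to $v^{-1}[f(\omega,Y(\omega)+v\D_{r,v}Y(\omega))-f(\omega,Y(\omega))]$ via $G((X_t(\omega))_{t\geq 0},x)=f(\omega,x)$.

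The main point to watch is the bookkeeping of the several almost-sure identities involved: the functional representations of $Y$ via $F$ and of $f$ via $G$ hold only outside $\mathbb{P}$-null subsets of $\Omega$, whereas Theorem \ref{diffthm} delivers its difference-operator identity outside a $\mathbb{P}\otimes\m$-null set. Lifting each $\mathbb{P}$-null exceptional set to a $\mathbb{P}\otimes\m$-null subset of $\Omega\times{[0,\infty[}\times(\R\setminus\{0\})$ via Fubini, all relevant equalities become simultaneously valid on the complement of a single $\mathbb{P}\otimes\m$-null set, which is precisely where the conclusion is asserted.
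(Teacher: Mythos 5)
Your proposal is correct and follows essentially the same route as the paper's own proof: represent $f(\cdot,Y)$ as the composed functional $G(h,F(h))$ of the path, apply Theorem \ref{diffthm} to this composition and to $Y$ itself (the latter giving $F((X_t+v\one_{[0,t]}(r))_{t\geq 0})=Y+v\D_{r,v}Y$), and then telescope by inserting $G(X,Y+v\D_{r,v}Y)$. Your explicit naming of $H$ and the remark on consolidating the exceptional null sets via Fubini are just slightly more detailed presentations of what the paper does implicitly.
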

\begin{proof}
Since we have by Lemma \ref{replem} that $Y=F(X)$, $\mathbb{P}$-a.s., we can use equation (\ref{repf}) to get $$f(\cdot,Y)=G(X,F(X)),\ \mathbb{P}\text{-a.s.}$$ The measurability of the mapping $$D\left({[0,\infty[}\right)\to\R,\ h\mapsto G(h,F(h))$$ and Theorem \ref{diffthm} imply that $\mathbb{P}\otimes\m$-a.e.
\begin{equation*}
\begin{split}
&\D_{r,v}f(\cdot,Y)\\
&=\frac{G\left(\left(X_t+v\one_{[0,t]}(r)\right)_{t\geq 0},F\left(\left(X_t+v\one_{[0,t]}(r)\right)_{t\geq 0}\right)\right)-G\left(X,F\left(X\right)\right)}{v}.
\end{split}
\end{equation*}
Splitting up the last term into two summands, using (\ref{chain}) to get $$F\left(\left(X_t+v\one_{[0,t]}(r)\right)_{t\geq 0}\right)=Y+v\D_{r,v}Y,\quad\mathbb{P}\otimes\m\text{-a.e.},$$  we arrive at
\begin{equation*}
\begin{split}
&\frac{G\left(\left(X_t+v\one_{[0,t]}(r)\right)_{t\geq 0},Y+v\D_{r,v}Y\right)-G\left(X,Y+v\D_{r,v}Y\right)}{v}\\
&+\frac{G\left(X,Y+v\D_{r,v}Y\right)-G\left(X,Y\right)}{v}\\
&=\left(\D_{r,v}f\right)\left(\cdot,Y+v\D_{r,v}Y\right)+\frac{f\left(\cdot,Y+v\D_{r,v}Y\right)-f(\cdot,Y)}{v},
\end{split}
\end{equation*}
$\mathbb{P}\otimes\m$-a.e.
\end{proof}
\begin{rem}
\begin{enumerate}
\item[(i)] The random variables $f(\cdot,Y)$, $G\left(X,Y+v\D_{r,v}Y\right)$ etc. were defined as usual by first selecting representatives of $\tilde Y\in Y$ and $Z\in Y+v\D_{r,v}Y$, then taking the equivalence classes of $f(\cdot, \tilde Y)$ and $G(X,Z)$ in $\mathrm{L}_0\left(\Omega,\mathcal{F},\mathbb{P}\right)$ and $\mathrm{L}_0\left(\mathbb{P}\otimes\m\right)$, respectively. Equation (\ref{repf}) implies that the definition of $G\left(X,Y+v\D_{r,v}Y\right)$ is meaningful and does not depend on the choice of the functional $G$ up to functions being zero $\mathbb{P}\otimes\m$-a.e.
\item[(ii)] The expression $\left(\D_{r,v}f\right)\left(\cdot,Y+v\D_{r,v}Y\right)$ in $\mathrm{L}_0(\mathbb{P}\otimes\m)$ in the sense of (i) and (\ref{fdefg}) is well-defined, i.e. it does not depend on the choice of the functional $G$:

By the same calculations as in the proof, for another functional $\tilde G$ satisfying (\ref{repf}), one gets $\mathbb{P}\otimes\m$-a.e.
\begin{equation}\label{eqpartial}
\begin{split}
\D_{r,v}f(\cdot,Y)=&\frac{\tilde G\left(\!\left(X_t+v\one_{[0,t]}(r)\right)_{t\geq 0},Y\!\!+v\D_{r,v}Y\right)\!-\!\tilde G\left(X,Y\!\!+v\D_{r,v}Y\right)}{v}\\
&-\frac{\tilde G\left(X,Y+v\D_{r,v}Y\right)-\tilde G\left(X,Y\right)}{v}.
\end{split}
\end{equation}
Considering (i) and (\ref{repf}) we conclude that $$\tilde G\left(X,Y+v\D_{r,v}Y\right)=G\left(X,Y+v\D_{r,v}Y\right),\quad\mathbb{P}\otimes\m\text{-a.e.,}$$ therefore also
\begin{equation}\label{eqpartial2}
\begin{split}&\tilde G\left(\left(X_t+v\one_{[0,t]}(r)\right)_{t\geq 0},Y+v\D_{r,v}Y\right)\\
&= G\left(\left(X_t+v\one_{[0,t]}(r)\right)_{t\geq 0},Y+v\D_{r,v}Y\right),\quad \mathbb{P}\otimes\m\text{-a.e.},
\end{split}
\end{equation}
because we can express these terms with help of equation (\ref{eqpartial}) for $G$ and $\tilde G$. Equation (\ref{eqpartial2}) then shows that $\left(\D_{r,v}f\right)\left(\cdot,Y+v\D_{r,v}Y\right)$ is uniquely defined up to functions which equal zero $\mathbb{P}\otimes\m$-a.e.
\end{enumerate}
\end{rem}
\bibliographystyle{plain}

\end{document}